\tikzstyle{bull}=[circle,draw=black,fill=black!80]
\tikzstyle{holl}=[circle,draw=black]
\newtheorem{thm}{Theorem}[section]
\newtheorem{defn}[thm]{Definition}
\newtheorem{rmk}[thm]{Remark}
\newtheorem{prop}[thm]{Proposition}
\newtheorem{claim}[thm]{Claim}
\newcommand{\ts}{\textsc}
\newcommand{\pushout}[9]{\parbox{2in}{
\setlength{\unitlength}{.7in}
\begin{center}
\begin{picture}(2,2.5)(1,.5)
\put(1.8,2.8){\vector(-1,-1){.6}}
\put(2.8,1.8){\vector(-1,-1){.6}}
\put(2.2,2.8){\vector(1,-1){.6}}
\put(1.2,1.8){\vector(1,-1){.6}}
\put(2,3){\makebox(0,0){$#1$}}              
\put(1.1,2){\makebox(0,0){$#2$}}            
\put(2.9,2){\makebox(0,0){$#3$}}            
\put(2,1){\makebox(0,0){$#4$}}
\put(1.35,2.65){\makebox(0,0)[r]{$#5$}}     
\put(2.65,2.65){\makebox(0,0)[l]{$#6$}}     
\put(1.35,1.35){\makebox(0,0)[r]{$#7$}}
\put(2.65,1.35){\makebox(0,0)[l]{$#8$}}
\end{picture}
\end{center}}}
\begin{document}

\title{Sections in orthomodular structures of decompositions}
\author{{\bf John Harding and Taewon Yang}}
\date{}

\maketitle
\markboth{John Harding and Taewon Yang}{Sections of Decompositions}

\begin{abstract}
There is a family of constructions to produce orthomodular structures from modular lattices, lattices that are $M$ and $M^*$-symmetric, relation algebras, the idempotents of a ring, the direct product decompositions of a set or group or topological space, and from the binary direct product decompositions of an object in a suitable type of category. We show that an interval $[0,a]$ of such an orthomodular structure constructed from $A$ is again an orthomodular structure constructed from some $B$ built from $A$. When $A$ is a modular lattice, this $B$ is an interval of $A$, and when $A$ is an object in a category, this $B$ is a factor of $A$. 
\end{abstract}

\section{Introduction}

The key fact in the quantum logic approach to quantum mechanics \cite{BVN,Mackey} is that the closed subspaces of a Hilbert space form an orthomodular poset (abbrev.: \ts{omp}). Quantum logic formulates a portion of quantum mechanics in terms of arbitrary \ts{omp}s, and either attempts to justify the special role played by the \ts{omp} constructed from a Hilbert space, or to propose alternatives to this \ts{omp}. 

A number of types of \ts{omp} arise from constructions very close to the Hilbert space one. Taking a possibly incomplete inner product space $E$, its splitting subspaces are those ordered pairs of orthogonal subspaces $(S,T)$ where $E=S\oplus T$. The collection of splitting subspaces forms an \ts{omp} \cite{Dvurecenskij}. Moving a step further from Hilbert spaces, for any vector space $V$ the ordered pairs $(S,T)$ of subspaces with $V=S\oplus T$ forms an \ts{omp}. Such \ts{omp}s have been considered by a number of authors \cite{Mushtari,Harding1,Ovchinnikov,Chevalier1}. 

The \ts{omp} constructed from pairs of subspaces of a vector space $V$ can be realized from the perspective of lattice theory. The collection of all subspaces of $V$ forms a modular lattice, and those $(S,T)$ with $V=S\oplus T$ are exactly the complementary pairs in this lattice. This construction can be applied to any bounded modular lattice, and even to any lattice that is both $M$ and $M^*$-symmetric \cite{Mushtari,Harding1,Chevalier2}. 

The closed subspaces of a Hilbert space correspond to orthogonal projections, which are certain idempotents of the endomorphism ring. This can be extended to show that the *-projections of any *-ring form an \ts{omp} \cite{Foulis}, and further, that the idempotents of any ring with unit form an \ts{omp} \cite{Katrnoska}. This construction is closely related to those above. For a vector space $V$, the direct sum decompositions $V=S\oplus T$ correspond to the idempotents of the endomorphism ring of $V$, and for a ring $R$, the idempotents of $R$ correspond to direct sum decompositions of the left $R$-module $R_R$. 

The above constructions all have ties to linear algebra and direct sums. A different perspective, allowing movement to a broader setting, comes from the fact that finite direct sums and finite direct products of vector spaces coincide. In \cite{Harding1} it was shown that the direct product decompositions of any set, group, poset, topological space, and uniform space form an \ts{omp}. The direct product decompositions of a set $X$ correspond to certain ordered pairs of equivalence relations called factor pairs \cite{Burris}, so this construction can be made from the algebra of relations on $X$. This can be extended to show that ordered pairs of permuting and complementary equivalence elements of any relation algebra form an \ts{omp}. In \cite{Harding5}, this construction from decompositions was taken to a categorical setting, where it was shown that the direct product decompositions of any object in an honest category (essentially one where ternary product diagrams form a pushout) form a type of orthomodular structure known as an orthoalgebra (abbrev.:~\ts{oa}). See \cite{Harding1,Harding2,Harding3,Harding4,Harding5,Harding6,Harding7,Harding8} for further details on the orthostructures Fact~$A$. 

In all these constructions, an orthomodular structure we call Fact~$A$ is produced from a structure $A$. It is well known that an interval $[0,a]$ of an \ts{omp} or \ts{oa} naturally forms an \ts{omp} or \ts{oa}. It is the purpose of this note to show that such an interval of Fact~$A$ is given by Fact~$B$ for some structure created from $B$. In the case that Fact~$A$ is constructed from complementary pairs $(x,y)$ of a bounded modular lattice $A$, the interval $[0,(x,y)]$ of Fact~$A$ is isomorphic to Fact~$B$ where $B$ is the interval $[0,x]$ of $A$ considered as a modular lattice. When Fact~$A$ is built from the idempotents $e$ of a ring $R$, then $[0,e]$ is isomorphic to Fact~$B$ where $B=\{x:ex=x=xe\}$. Finally, when $A$ is a set, or object in an honest category, then the interval $[0,[A\simeq B\times C]]$ is isomorphic to Fact~$B$. 

This note is organized in the following fashion. In the second section we give the pertinent definitions. In the third, we provide proofs of our result in the case Fact~$A$ is constructed from a bounded modular lattice or ring. These are short and easy, and the results for the vector space setting, lattices that are $M$ and $M^*$-symmetric, set, and relation algebra setting follow directly, or with minor modifications, from these. In the fourth section we provide the most difficult of the results, that of the the construction applied to an object in an honest category. 

\section{Preliminaries} 

\begin{defn}
An orthocomplemented poset is a bounded poset $P$ equipped with a unary \mbox{operation} $'$ that is period two, order inverting, where each $x,x'$ have only the bounds as lower and upper bounds. An orthocomplemented poset is an \ts{omp} if 
\vspace{1ex}
\begin{enumerate}
\item $x\leq y'$ implies $x,y$ have a least upper bound $x\oplus y$.
\item $x\leq y$ implies $x\oplus(x\oplus y')'=y$.
\end{enumerate}
\end{defn}

\begin{defn}
An orthoalgebra (abrev.: \ts{oa}) is a set with partially defined binary operation $\oplus$ that is commutative and associative and constants $0,1$ such that 
\vspace{1ex}
\begin{enumerate}
\item For each $a$ there is a unique element $a'$ with $a\oplus a'=1$.
\item If $a\oplus a$ is defined, then $a=0$. 
\end{enumerate}
\label{oa}
\end{defn}

In an orthocomplemented poset, we say $a,b$ are orthogonal if $a\leq b'$, hence $b\leq a'$. Each \ts{omp} naturally forms an \ts{oa} under the partial binary operation of orthogonal joins. Conversely, in an \ts{oa} define $a\leq b$ if there is $c$ with $a\oplus c = b$. Then with the obvious operation $'$ this forms an orthocomplemented poset in which $a\oplus b$ is a minimal, but not necessarily least, upper bound of $a,b$. This orthocomplemented poset constructed from an \ts{oa} is an \ts{omp} iff $a\oplus b$ gives least upper bounds for all $a,b$. Importantly, the operation $\oplus$ in an \ts{oa} is cancellative, so if $a\oplus b = a\oplus c$, then $b=c$. For further details on \ts{omp}s and for \ts{oa}s, including the following, see \cite{Ptak,Kalmbach,oa}. 

\begin{prop}
If $P$ is an \ts{omp}, then an interval $a\!\downarrow$ of $P$ forms an \ts{omp} under the induced partial ordering and the orthocomplementation defined by $b^{\#}=a\wedge b'$. 
\end{prop}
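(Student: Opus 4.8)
The plan is to realize $a\!\downarrow=\{b:b\le a\}$ as a bounded poset with bottom $0$ and top $a$, to verify that $b^{\#}=a\wedge b'$ is an orthocomplementation on it, and then to check the two \ts{omp} axioms by reducing every interval operation to an ambient one in $P$. The crucial first point---since meets need not exist in an \ts{omp}---is that $b^{\#}$ exists at all. For $b\le a$ we have $b\le(a')'$, so the orthogonal join $b\oplus a'$ is defined, and the De Morgan law (if $s=\sup\{x,y\}$ exists then $s'=\inf\{x',y'\}$, which is immediate from $'$ being order-inverting and period two) yields $(b\oplus a')'=a\wedge b'=b^{\#}$. Feeding $b\le a$ into axiom~(2) then gives $b\oplus(b\oplus a')'=a$, that is $b\oplus b^{\#}=a$, so $b^{\#}\le a$ lies in the interval and is the orthogonal complement of $b$ relative to $a$.

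That $\#$ is an orthocomplementation is then routine. It is order-reversing by monotonicity of meet; $b\wedge b^{\#}\le b\wedge b'=0$ and $b\oplus b^{\#}=a$ (a least upper bound of orthogonal elements) show that $0$ and $a$ are the only lower and upper bounds of the pair $b,b^{\#}$ inside the interval. For the period-two law I would observe that the construction assigns to each $x\le a$ the relative complement satisfying $x\oplus x^{\#}=a$; applying this to $b^{\#}$ gives $b^{\#}\oplus(b^{\#})^{\#}=a=b^{\#}\oplus b$, and cancellativity of $\oplus$ forces $(b^{\#})^{\#}=b$.

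For the two \ts{omp} axioms I would show that interval joins and meets coincide with those of $P$ whenever they land in $a\!\downarrow$. If $b\le c^{\#}$ in the interval then $b\le c^{\#}\le c'$, so $b\oplus c$ exists in $P$, and $b,c\le a$ keeps it below $a$; a least upper bound of $b,c$ in $P$ that lies inside $a\!\downarrow$ is their least upper bound there, which is axiom~(1). Axiom~(2) is, via the same De Morgan law, equivalent to the orthomodular law $x\le y\Rightarrow x\oplus(x'\wedge y)=y$; for $b\le c$ in the interval one checks $b^{\#}\wedge_{a}c=b'\wedge c$ (the extra meet with $a$ is absorbed since $c\le a$) and that $b\oplus(b'\wedge c)$ again stays below $a$, whence orthomodularity of $P$ gives $b\oplus(b'\wedge c)=c$, which is exactly axiom~(2) transported into $a\!\downarrow$.

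The main obstacle is precisely this existence question in the first step: an \ts{omp} is only a poset, so the whole argument turns on manufacturing $a\wedge b'$ as the De Morgan dual of the orthogonal join $b\oplus a'$ and recognizing it through axiom~(2) as the relative orthocomplement $b\oplus b^{\#}=a$. Once the relevant meets and joins are shown to exist in $P$ and to remain inside $a\!\downarrow$, every interval computation descends to the ambient structure and the axioms follow with no further difficulty.
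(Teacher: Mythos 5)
Your proof is correct. Note first that the paper itself offers no proof of this proposition; it is stated as background and referred to the literature (Pt\'{a}k--Pulmannov\'{a}, Kalmbach, Foulis--Greechie--R\"{u}ttimann), so there is no in-paper argument to compare against. Your argument is the standard one, and you correctly isolate the only genuinely delicate point: since an \ts{omp} is merely a poset, the meet $a\wedge b'$ must be shown to exist, and you manufacture it as the De~Morgan dual $(b\oplus a')'$ of an orthogonal join guaranteed by axiom~(1), then identify it via axiom~(2) as the relative complement with $b\oplus b^{\#}=a$. The remaining steps all check out: the bounds condition for the pair $b,b^{\#}$ inside the interval follows because $b\oplus b^{\#}=a$ is a \emph{least} upper bound in $P$ and $b^{\#}\le b'$; the period-two law via cancellativity is legitimate, since the paper's preliminaries record that every \ts{omp} is an \ts{oa} under orthogonal joins and that $\oplus$ in an \ts{oa} is cancellative; and your observation that joins and meets computed in $P$ that land in $a\!\downarrow$ coincide with the interval ones (together with the absorption $b^{\#}\wedge_{a}c=b'\wedge c$ for $c\le a$) correctly transports both \ts{omp} axioms into the interval.
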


\begin{prop}
If $A$ is an \ts{oa}, then an interval $a\!\downarrow$ of $A$ forms an \ts{oa} with constants $0,a$ under the restriction of $\oplus$ to this interval. 
\end{prop}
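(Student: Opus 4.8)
The plan is to equip the interval $a\!\downarrow=\{b:b\le a\}$ with the partial operation $\oplus_a$ obtained by restricting $\oplus$: for $b,c\le a$ I declare $b\oplus_a c$ defined exactly when $b\oplus c$ is defined in $A$ \emph{and} $b\oplus c\le a$, and in that case set $b\oplus_a c=b\oplus c$. The constants will be $0$, the least element of $A$, and $a$, the top of the interval, both of which lie in $a\!\downarrow$. Commutativity of $\oplus_a$ is inherited verbatim from $\oplus$, and axiom~(2) of Definition~\ref{oa} is immediate: if $b\oplus_a b$ is defined then $b\oplus b$ is defined in $A$, whence $b=0$. So the genuine content will be the existence and uniqueness of orthocomplements relative to the new top $a$, together with associativity as a partial operation.

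For the orthocomplement I would fix $b\le a$. By the very definition of the order there is some $c$ with $b\oplus c=a$; commutativity then gives $c\le a$, so $c\in a\!\downarrow$ and $b\oplus_a c=a$. Thus $b$ has a complement $b^{\#}=c$ inside the interval. Uniqueness is handed to us directly by the cancellativity of $\oplus$ recorded in the excerpt: if $b\oplus c_1=a=b\oplus c_2$ then $c_1=c_2$. This establishes axiom~(1) with $b^{\#}$ the unique element of $a\!\downarrow$ satisfying $b\oplus_a b^{\#}=a$.

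Associativity is the step I expect to demand the most care, since the domain condition ``$\le a$'' must be shown to be compatible with rebracketing. Following the standard formulation of associativity for orthoalgebras, I would suppose $c\oplus_a d$ and $b\oplus_a(c\oplus_a d)$ are both defined; unwinding the definition, $c\oplus d$ and $b\oplus(c\oplus d)$ are defined in $A$ and both lie below $a$. By the associativity of $\oplus$ in $A$, the sums $b\oplus c$ and $(b\oplus c)\oplus d$ are then defined and $(b\oplus c)\oplus d=b\oplus(c\oplus d)\le a$. The crucial observation is that every partial sum lies below the total sum, so $b\oplus c\le(b\oplus c)\oplus d\le a$ by transitivity of the poset order; hence $b\oplus_a c$ is defined, and therefore $(b\oplus_a c)\oplus_a d$ is defined and equals $b\oplus_a(c\oplus_a d)$. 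The reverse implication is symmetric, giving associativity of $\oplus_a$.

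Assembling these verifications shows that $(a\!\downarrow,\oplus_a,0,a)$ satisfies Definition~\ref{oa}. The one genuinely new feature compared with the ambient algebra is that the unit has shifted from $1$ to $a$; every domain check reduces to the facts that $\le$ is transitive and that sub-sums lie beneath the full sum, which is precisely what keeps the interval closed under the rebracketings associativity requires. For this reason I anticipate that the associativity clause, rather than the orthocomplementation or consistency clauses, is where the argument must be written out with the most attention to the partial-operation domains.
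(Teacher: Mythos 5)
Your proof is correct. Note that the paper does not actually prove this proposition --- it is quoted as a standard fact with a pointer to \cite{Ptak,Kalmbach,oa} --- so there is no in-paper argument to compare against; what you have written is the standard proof from those sources. The one point of real substance, which you identify and handle correctly, is the domain of the restricted operation: in an \ts{oa} the element $b\oplus c$ is only a \emph{minimal} upper bound of $b$ and $c$, so $b,c\le a$ together with $b\oplus c$ being defined does not by itself force $b\oplus c\le a$ (unlike in an \ts{omp}); hence the clause ``$b\oplus c\le a$'' must be built into the definition of $\oplus_a$, and associativity then becomes the only axiom whose verification requires work, resting on $b\oplus c\le (b\oplus c)\oplus d$ and transitivity of the induced order, exactly as you argue. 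The remaining checks --- existence of the relative complement from the definition of $\le$ plus commutativity, uniqueness from the cancellativity the paper records, and the consistency axiom --- are routine and you give them correctly.
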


We turn next to a description of methods to construct \ts{omp}s from various types of structures. See \cite{Harding1} for further details. 

\begin{thm}
For a bounded modular lattice $L$, let $L^{(2)}$ to be the set of all ordered pairs of complementary elements of $L$ and define $\leq$ and $'$ on $L^{(2)}$ as follows:
\vspace{1ex}
\begin{enumerate}
\item $(x_1,x_2)\,\leq\, (y_1,y_2)$ iff $x_1\leq y_1$ and $y_2\leq x_2$, 
\item $(x_1,x_2)' = (x_2,x_1)$. 
\end{enumerate}
\vspace{1ex}
Then $L^{(2)}$ is an \ts{omp}. 
\label{lattice}
\end{thm}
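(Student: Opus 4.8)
The plan is to first verify that $L^{(2)}$ with the stated order and involution is an orthocomplemented poset, and then to establish the two \ts{omp} axioms by producing an explicit formula for the orthogonal join.

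For the orthocomplemented poset structure, I would dispatch the routine items in sequence. The relation $\leq$ is a partial order because it is defined coordinatewise using $\leq$ on the first coordinate and its reversal on the second. The pair $(0,1)$ is complementary and is the least element, while $(1,0)$ is the greatest. The map $(x_1,x_2)\mapsto(x_2,x_1)$ is visibly an involution, and it is order inverting since the defining conditions $x_1\leq y_1$ and $y_2\leq x_2$ of $(x_1,x_2)\leq(y_1,y_2)$ are exactly the conditions for $(y_2,y_1)\leq(x_2,x_1)$. Finally, a common lower bound $(z_1,z_2)$ of $(x_1,x_2)$ and its complement $(x_2,x_1)$ forces $z_1\leq x_1\wedge x_2=0$ and $z_2\geq x_1\vee x_2=1$, so the only common lower bound is $(0,1)$; dually the only common upper bound is $(1,0)$. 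None of this uses modularity.

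The substance of the argument lies in the join. I would first note that $(x_1,x_2)\leq(y_1,y_2)'$ unwinds to the orthogonality conditions $x_1\leq y_2$ and $y_1\leq x_2$, and then propose
\[
(x_1,x_2)\oplus(y_1,y_2)=(x_1\vee y_1,\,x_2\wedge y_2).
\]
The key step, and the point where modularity is indispensable, is to show this candidate is again a complementary pair. Using $y_1\leq x_2$ and the modular law I would compute $(x_1\vee y_1)\wedge x_2=y_1$, whence $(x_1\vee y_1)\wedge(x_2\wedge y_2)=y_1\wedge y_2=0$; dually, using $x_1\leq y_2$, I would get $x_1\vee(x_2\wedge y_2)=y_2$, whence $(x_1\vee y_1)\vee(x_2\wedge y_2)=y_1\vee y_2=1$. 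That this pair is the \emph{least} upper bound is then immediate, since any upper bound $(z_1,z_2)$ of the two pairs satisfies $x_1\vee y_1\leq z_1$ and $z_2\leq x_2\wedge y_2$. This establishes axiom (1).

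For axiom (2), with $x=(x_1,x_2)\leq(y_1,y_2)=y$ I would substitute into the join formula twice. The orthogonality needed for $x\oplus y'$ follows from $x\leq y$, and the formula gives $x\oplus y'=(x_1\vee y_2,\,x_2\wedge y_1)$, so $(x\oplus y')'=(x_2\wedge y_1,\,x_1\vee y_2)$. A second application yields
\[
x\oplus(x\oplus y')'=\bigl(x_1\vee(x_2\wedge y_1),\;x_2\wedge(x_1\vee y_2)\bigr).
\]
Two final appeals to the modular law, using $x_1\leq y_1$ for the first coordinate and $y_2\leq x_2$ for the second, collapse this to $(y_1,y_2)=y$, as required. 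The whole difficulty is thus concentrated in the complementation computations for the join: once the formula for $\oplus$ is in hand and shown to land in $L^{(2)}$, both axioms reduce to short modular-law identities.
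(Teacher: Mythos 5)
Your proof is correct and complete: the explicit join formula $(x_1\vee y_1,\,x_2\wedge y_2)$, the modular-law verification that it is a complementary pair, and the two further modular-law collapses for axiom (2) all check out. The paper itself states this theorem without proof (deferring to \cite{Harding1}), but your argument is exactly the standard one, and it matches the style of the modular computations the paper does carry out explicitly in its proof of Theorem~\ref{latticeproof}.
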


\begin{thm}
For $R$ a ring with unit, let $E(R)$ be its idempotents and define a relation $\leq$ and unary operation $'$ on $E(R)$ as follows: 
\vspace{1ex}
\begin{enumerate}
\item $e\leq f$ iff $ef=e=fe$,
\item $e'=1-e$.
\end{enumerate}
\vspace{1ex}
Then $E(R)$ is an \ts{omp}. 
\label{ring}
\end{thm}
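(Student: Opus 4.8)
The plan is to verify the defining conditions of an \ts{omp} directly by ring computations, exploiting throughout that $e \leq f$ is equivalent to $ef = e = fe$ and that $e' = 1-e$. First I would establish that $\leq$ is a partial order. Reflexivity is immediate from idempotency ($ee = e$); antisymmetry follows since $e \leq f$ and $f \leq e$ give $e = ef = f$; and transitivity follows from $e \leq f \leq g$ by writing $eg = (ef)g = e(fg) = ef = e$ and symmetrically $ge = e$. The bounds are $0$ and $1$, since $0e = 0 = e0$ and $1e = e = e1$ for every idempotent $e$.

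Next I would check that $'$ is an orthocomplementation. That $1 - e$ is idempotent follows from $(1-e)^2 = 1 - 2e + e^2 = 1 - e$, and $e'' = e$ is immediate. For order reversal, if $ef = e = fe$ then a short expansion gives $(1-f)(1-e) = 1 - f = (1-e)(1-f)$, so $f' \leq e'$. The remaining axiom, that $e$ and $e'$ share only the trivial bounds, is where a little care is needed: if $g \leq e$ and $g \leq e'$ then $g = ge$ and $g = g(1-e) = g - ge$, forcing $ge = 0$ and hence $g = 0$; dually, a common upper bound $h$ of $e$ and $e'$ must satisfy $h = 1$.

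For the two \ts{omp} axioms, the key observation is that when $e \leq f'$, equivalently $ef = 0 = fe$, the ring sum $e + f$ is again idempotent (since $(e+f)^2 = e+f$ once the cross terms vanish) and serves as $e \oplus f$. I would verify that $e + f$ is an upper bound of $e$ and $f$, and that it is the least such: any common upper bound $g$ satisfies $(e+f)g = e + f = g(e+f)$, so $e + f \leq g$. For the orthomodular identity, given $e \leq f$ I would compute step by step: since $ef = e = fe$ one has $ef' = 0 = f'e$, so $e \oplus f' = e + (1 - f)$ is defined; its complement is $1 - (e + 1 - f) = f - e$, which is idempotent and orthogonal to $e$; hence $e \oplus (e \oplus f')' = e + (f - e) = f$, as required.

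The computations are all short, so there is no deep obstacle; the one point demanding vigilance is the noncommutativity of $R$. Throughout I must verify both one-sided products — for instance both $ef = 0$ and $fe = 0$ in the orthogonality hypothesis, and both $(1-f)(1-e)$ and $(1-e)(1-f)$ for order reversal — because the definition of $\leq$ is a genuinely two-sided condition. Once these two-sided checks are carried out, each axiom reduces to an elementary manipulation of idempotents.
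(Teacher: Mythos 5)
Your proof is correct. Note that the paper itself gives no proof of this theorem: it is quoted as a known result, with the argument deferred to the cited literature (Katrno\v{s}ka's paper on logics of idempotents and Harding's \emph{Decompositions in quantum logic}), so there is no in-paper argument to compare against. Your direct verification is the standard one and is complete: the two-sided checks forced by noncommutativity are handled correctly, the orthogonal join is rightly identified as the ring sum $e+f$ (with $(e+f)g=e+f=g(e+f)$ giving leastness, not just minimality), and the orthomodular identity reduces, as you show, to $(e\oplus f')'=f-e$ being an idempotent orthogonal to $e$, whence $e+(f-e)=f$.
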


\begin{thm}
For $X$ a set let Fact~$X$ be the set of all ordered pairs of equivalence relations $(\theta_1,\theta_2)$ of $X$ such that $\theta_1\cap \theta_2=\Delta$ and $\theta_1\circ\theta_2 = \nabla$, where $\Delta$ and $\nabla$ are the smallest and largest equivalence relations. Define $\leq$ and $'$ on Fact~$X$ as follows:
\vspace{1ex}
\begin{enumerate}
\item $(\theta_1,\theta_2)\leq(\phi_1,\phi_2)$ iff $\theta_1\subseteq\phi_1$, $\phi_2\subseteq\theta_2$, and all relations involved permute, 
\item $(\theta_1,\theta_2)'=(\theta_2,\theta_1)$. 
\end{enumerate}
\vspace{1ex}
Then Fact~$X$ is an \ts{omp}. 
\label{set}
\end{thm}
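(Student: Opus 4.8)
The plan is to run the argument behind Theorem~\ref{lattice} inside the lattice $\mathrm{Eq}(X)$ of all equivalence relations on $X$, where meet is intersection, the bounds are $\Delta$ and $\nabla$, and the bottom and top of Fact~$X$ are $0=(\Delta,\nabla)$ and $1=(\nabla,\Delta)$. The one structural fact I rely on throughout is that permuting relations join by composition: if $\theta,\phi$ permute then $\theta\vee\phi=\theta\circ\phi$. Note first that the two defining conditions on a factor pair already force its own coordinates to permute, since $\theta_1\circ\theta_2=\nabla$ gives $\theta_2\circ\theta_1=(\theta_1\circ\theta_2)^{-1}=\nabla$. The operation $'$ is visibly an involution, and it is order inverting because the conditions $\theta_1\subseteq\phi_1$, $\phi_2\subseteq\theta_2$ together with the permutability clause are symmetric in the two coordinates.

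Since $\mathrm{Eq}(X)$ is not modular, the proof of Theorem~\ref{lattice} cannot be quoted verbatim; the device that replaces modularity is a permuting modular law: if $\alpha\subseteq\gamma$ and $\alpha$ permutes with $\beta$, then $(\alpha\vee\beta)\cap\gamma=\alpha\vee(\beta\cap\gamma)$. One inclusion holds in any lattice; for the other, given $(x,y)\in(\alpha\circ\beta)\cap\gamma$ choose $z$ with $x\mathrel{\alpha}z\mathrel{\beta}y$, observe that $(z,x)\in\alpha\subseteq\gamma$ forces $(z,y)\in\gamma$, whence $(z,y)\in\beta\cap\gamma$ and $(x,y)\in\alpha\circ(\beta\cap\gamma)$. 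Essentially every identity below is an instance of this law.

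With this in hand I would verify that $\leq$ is a partial order and that the orthocomplement conditions hold, then define, for orthogonal pairs $(\theta_1,\theta_2)\leq(\phi_1,\phi_2)'=(\phi_2,\phi_1)$, the orthogonal join
\[
(\theta_1,\theta_2)\oplus(\phi_1,\phi_2)=(\theta_1\circ\phi_1,\ \theta_2\cap\phi_2).
\]
That the right side is again a factor pair is a pair of modular-law computations using the orthogonality inclusions $\theta_1\subseteq\phi_2$ and $\phi_1\subseteq\theta_2$: one gets $(\theta_1\vee\phi_1)\cap\phi_2=\theta_1$, so the two coordinates meet in $\theta_1\cap\theta_2=\Delta$, while $\theta_1\vee(\theta_2\cap\phi_2)=\phi_2$ gives $(\theta_1\vee\phi_1)\vee(\theta_2\cap\phi_2)\supseteq\phi_1\vee\phi_2=\nabla$ for their join. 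Checking that this pair is the least upper bound gives axiom~(1); for axiom~(2), with $(\theta_1,\theta_2)\leq(\phi_1,\phi_2)$ the required identity unwinds to $\theta_1\vee(\theta_2\cap\phi_1)=\phi_1$ and its companion $\theta_2\cap(\theta_1\vee\phi_2)=\phi_2$, both immediate from the permuting modular law together with $\theta_1\vee\theta_2=\nabla$ and $\theta_1\cap\theta_2=\Delta$.

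The main obstacle I anticipate is transitivity of $\leq$, and specifically its permutability clause: from $(\theta_1,\theta_2)\leq(\phi_1,\phi_2)\leq(\psi_1,\psi_2)$ the inclusions $\theta_1\subseteq\psi_1$, $\psi_2\subseteq\theta_2$ are free, and $\theta_1$ permutes with $\psi_1$ and $\theta_2$ with $\psi_2$ automatically because one contains the other; but the cross condition, that $\theta_1$ permutes with $\psi_2$, is genuinely not given and must be produced. The way I would produce it is to first establish the identity $\theta_1=\phi_1\cap(\theta_1\vee\phi_2)$ from the permuting modular law and $\phi_1\cap\phi_2=\Delta$, and then run a relational chase: given $(x,y)\in\theta_1\circ\psi_2$, use $\theta_1\subseteq\phi_1$ and permutability of $\phi_1$ with $\psi_2$ to slide to $x\mathrel{\psi_2}w\mathrel{\phi_1}y$, show $(w,y)\in\theta_1\vee\phi_2$ using $\psi_2\subseteq\phi_2$, and conclude $w\mathrel{\theta_1}y$ from the displayed identity, so that $(x,y)\in\psi_2\circ\theta_1$. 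The reverse inclusion follows by taking inverses, and the companion permutability of $\theta_2$ with $\psi_1$ is dual. Once this is settled every composite written above is legitimate and the remaining verifications are routine.
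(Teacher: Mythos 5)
The paper itself does not prove this theorem: it is recalled in the preliminaries with a pointer to \cite{Harding1}, so your proposal must be judged on its own merits. Those merits are substantial, and your route is the one that actually works in the cited source: replace modularity of $\mathrm{Eq}(X)$ (which fails) by a Chin--Tarski-style fragment of modularity valid for permuting relations --- your ``permuting modular law,'' whose proof is correct. You also rightly identify that in transitivity the only non-free condition is that $\theta_1$ permutes with $\psi_2$, your identity $\theta_1=\phi_1\cap(\theta_1\vee\phi_2)$ is proved correctly, and the relational chase deducing $\theta_1\circ\psi_2=\psi_2\circ\theta_1$ from it is sound.

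The one place where what you verify is strictly weaker than what is required is exactly at the join-versus-composition distinction that makes this theorem delicate. Membership in Fact~$X$ demands $\sigma_1\circ\sigma_2=\nabla$, which in $\mathrm{Eq}(X)$ is strictly stronger than $\sigma_1\vee\sigma_2=\nabla$; yet in checking that $(\theta_1\circ\phi_1,\ \theta_2\cap\phi_2)$ is a factor pair you only establish the join condition $(\theta_1\vee\phi_1)\vee(\theta_2\cap\phi_2)\supseteq\phi_1\vee\phi_2=\nabla$. The repair is already contained in your proof of the modular law: the inclusion proved there, $(\alpha\circ\beta)\cap\gamma\subseteq\alpha\circ(\beta\cap\gamma)$ for $\alpha\subseteq\gamma$, is a statement about compositions, and with $\alpha=\theta_1$, $\beta=\theta_2$, $\gamma=\phi_2$ it gives the composition identity $\phi_2=\theta_1\circ(\theta_2\cap\phi_2)$ (the reverse inclusion uses $\theta_1\subseteq\phi_2$). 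Hence $(\theta_1\circ\phi_1)\circ(\theta_2\cap\phi_2)=\phi_1\circ\theta_1\circ(\theta_2\cap\phi_2)=\phi_1\circ\phi_2=\nabla$. The same identity, together with its converse, shows $\theta_1$ permutes with $\theta_2\cap\phi_2$ --- a condition you need, and never mention, just to conclude that your candidate for $(\theta_1,\theta_2)\oplus(\phi_1,\phi_2)$ is an \emph{upper bound} of $(\theta_1,\theta_2)$ in your order. For \emph{leastness}, if $(\rho_1,\rho_2)$ is any upper bound of both summands, the cross condition that $\rho_1$ permutes with $\theta_2\cap\phi_2$ comes for free, since $\theta_1\circ\phi_1\subseteq\rho_1$ gives $\rho_1\circ(\theta_2\cap\phi_2)\supseteq(\theta_1\circ\phi_1)\circ(\theta_2\cap\phi_2)=\nabla$ and likewise on the other side, while permutability of $\rho_2$ with $\theta_1\circ\phi_1$ follows from permutability with each factor. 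So nothing in your outline fails, but the verifications you call routine hide the recurring subtlety of the whole theorem, and each time the cure is to carry through the composition form of your lemma rather than its lattice form.
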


There are a number of extensions to these results that we briefly describe. 

\begin{rmk}{\em 
Let $(a,b)$ be an ordered pair of elements in a lattice. We say $(a,b)$ is a modular pair, written $(a,b)M$, if $c\leq b$ implies $c\vee (a\wedge b) = (c\vee a)\wedge b$; and $(a,b)$ is a dual-modular pair, written $(a,b)M^*$, if $b\leq c$ implies $c\wedge(a\vee b)=(c\wedge a)\vee b$. A lattice is $M$-symmetric if $(a,b)M$ implies $(b,a)M$, $M^*$-symmetric if $(a,b)M^*$ implies $(b,a)M^*$, and symmetric if it is both $M$ and $M^*$-symmetric. 
The result in Theorem~\ref{lattice} extends to a bounded symmetric lattice $L$ if $L^{(2)}$ is defined to be all complementary pairs of elements that are both modular and dual-modular pairs. 
}
\end{rmk}

\begin{rmk}{\em 
The result in Theorem~\ref{ring} has extension to more general structures known as orthomodular partial semigroups \cite{Gudder,Harding8}. These are structures with a partially defined multiplication that behaves in a similar way to the multiplication of a ring restricted to pairs of commuting elements. }
\end{rmk}

\begin{rmk}{\em 
The result in Theorem~\ref{set} is formulated in terms of the algebra of relations of a set. This can be generalized in an obvious way to apply to any relation algebra, in the sense of Tarski \cite{Harding1}. }
\label{a}
\end{rmk}

We turn to our final constructions of orthomodular structures, those from the direct product decompositions of an object in a certain type of category. We consider categories with finite products, hence a terminal object $\Omega$, and we use $\tau_A$ for the unique morphism $\tau_A:A\to\Omega$. Define an equivalence relation $\simeq$ on the morphisms in a category by setting $f\simeq g$ if there is an isomorphism $u$ with $u\circ f = g$, and use this to define an equivalence relation $\approx$ on the collection of all finite product diagrams by setting $(f_1,\ldots,f_m)\approx (g_1,\ldots,g_n)$ if $m=n$ and $f_i \simeq g_i$ for each $i=1,\ldots,n$. We let the equivalence class of $\approx$ containing $(f_1,\ldots,f_n)$ be $[\,f_1,\ldots,f_n\,]$ and call this equivalence class an $n$-ary decomposition of $A$. Also, for an $n$-tuple of morphisms with common domain $f_i:A\to A_i$, we use $(f_1,\ldots,f_n)$ for the morphism from $A$ into the product of their codomains. See \cite{Harding5} for further details. 

\begin{defn}
In a category with finite products, a binary product diagram $(f_1,f_2)$ where $f_i:A\to A_i$, is called a disjoint binary product if $(f_1,f_2,\tau_{A_1},\tau_{A_2})$ is a pushout. 
\vspace{1ex}

\begin{center}
\pushout{A}{A_1}{A_2}{\Omega}{f_1}{f_2}{\tau_{A_1}}{\tau_{A_2}}{1}
\end{center}
\vspace{-2ex}

\noindent A ternary product $(f_1,f_2,f_3)$ is disjoint if the binary products, such as $(f_1,(f_2,f_3))$, one can build from it are disjoint. 
\end{defn}

\begin{rmk}{\em 
This notion of disjointness is not something that holds of binary products in an arbitrary category with products. For instance, in a lattice considered as a category, products are given by joins. In this setting, the product of a pair $x,y$ is disjoint iff the meet of $x,y$ is the least element of the lattice, i.e. if the pair $x,y$ is disjoint in the sense usually used in lattice theory. However, many categories have the property that all binary products are disjoint. This is the case in the category of non-empty sets, groups, rings, topological spaces, and so forth. 
}
\end{rmk}

The essential property in constructing an orthomodular structure from the direct product decompositions of an object in a category seems to be that a certain diagram built from a ternary direct product diagram forms a pushout. This was introduced in \cite{Harding5} under the name {\em honest category}.

\begin{defn}
A category is honest if it has finite products; all projections are epimorphisms; and for each ternary product \mbox{diagram} $(f_1,f_2,f_3)$, where $f_i:A\to A_i$, the following diagram is a pushout. 
\label{h}
\end{defn}
\vspace{2ex}

\begin{center}
\pushout{A}{A_1\times A_3}{A_2\times A_3}{A_3}{(f_1, f_3)}{(f_2, f_3)}{\pi_2}{\pi_2}{1}
\end{center}

In \cite{Harding5} it was shown that the disjoint binary decompositions of an object $A$ in an honest category form an \ts{oa}. We would like to show that an interval $[f_1,f_2]\!\downarrow$, where $f_i:A\to A_i$, is isomorphic to the \ts{oa} of disjoint binary decompositions of the factor $A_1$. For a disjoint decomposition of $A$ in the interval $[f_1,f_2]\!\downarrow$ we can produce a decomposition of $A_1$, but cannot show this decomposition is disjoint. However, by slightly modifying the construction of \cite{Harding5} we can obtain our result. 

\begin{defn}
A category is strongly honest if it has finite products; projections are epic; and for all ternary product diagrams $(f_1,f_2,f_3)$ the diagram of Definition~\ref{h} is a pushout. A category is very strongly honest if it is honest and all binary product diagrams are disjoint. 
\end{defn}

Clearly any strongly honest category is honest, and any very strongly honest category is strongly honest. The construction of an \ts{oa} for an honest category can therefore be applied to a strongly honest or very strongly honest category. However, we give a modified construction for strongly honest categories that uses all \mbox{binary} product decompositions, rather than just disjoint ones. This allows us to bypass the difficulty with disjointness when passing to a decomposition of a factor. We note that the two constructions will agree when applied to a very strongly honest category. 

\begin{thm}
Let $A$ be an object in a strongly honest category and $\mathcal{D}(A)$ be the collection of binary decompositions of $A$. Define a partial binary operation $\oplus$ on $\mathcal{D}(A)$ where $[f_1,f_2]\oplus [g_1,g_2]$ is defined if there is a ternary decomposition $[c_1,c_2,c_3]$ with 
\vspace{1ex}
\[[f_1,f_2] = [c_1,(c_2,c_3)]\quad\mbox{and}\quad[g_1,g_2]=[c_2,(c_1,,c_3)]\]

\vspace{1ex}
\noindent In this case, define $[f_1,f_2]\oplus[g_1,g_2]=[(c_1,c_2),c_3]$. Then, with this operation and constants $0=[\tau_A,1_A]$ and $1=[1_A,\tau_A]$, the decompositions $\mathcal{D}(A)$ form an \ts{oa}. 
\label{honest}
\end{thm}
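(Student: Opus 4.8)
The plan is to check the \ts{oa} axioms of Definition~\ref{oa} for $(\mathcal{D}(A),\oplus,0,1)$ directly, using only three ingredients: the universal properties of finite products (associativity and symmetry of $\times$, and the fact that $\Omega$ is a unit for $\times$), the observation that a binary product diagram $(f_1,f_2)$ is the same thing as an isomorphism $(f_1,f_2):A\cong A_1\times A_2$, and the two hypotheses packaged into strong honesty, namely that every projection is epic and that every ternary product diagram induces the pushout of Definition~\ref{h}. The guiding observation is that almost everything reduces to rearranging, inserting, and splitting factors of product diagrams, and that the pushout hypothesis is genuinely needed at exactly one point. Throughout I abbreviate decompositions as $[f],[g],[h]$ when the labelling of entries is immaterial.

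I would first dispatch the axioms that need only products. For the unit law, commutativity, the existence of complements, and the value computation inside associativity, a witnessing higher decomposition can be written by hand. Inserting a terminal factor, $(f_1,\tau_A,f_2)$ witnesses $[f_1,f_2]\oplus 0=[f_1,f_2]$ and $(f_1,f_2,\tau_A)$ witnesses $[f_1,f_2]\oplus[f_2,f_1]=[1_A,\tau_A]=1$, each identity collapsing via $A_1\times\Omega\cong A_1$ and the isomorphism $(f_1,f_2):A\cong A_1\times A_2$. Commutativity follows by interchanging the first two entries of a witnessing ternary decomposition and applying the swap $C_1\times C_2\cong C_2\times C_1$. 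For associativity I would note that when the left side is defined, the relation $[(d_1,d_2),d_3]=[e_1,(e_2,e_3)]$ splits the factor $D_3\cong E_2\times E_3$, producing a single quaternary product diagram $[c_1,c_2,c_3,c_4]$ from which both $([f]\oplus[g])\oplus[h]$ and $[f]\oplus([g]\oplus[h])$ are read off, by pure product rearrangement, as $[(c_1,c_2,c_3),c_4]$. Uniqueness of the complement is equally elementary: if $[f_1,f_2]\oplus[g_1,g_2]=1$ is witnessed by $[c_1,c_2,c_3]$, then $[(c_1,c_2),c_3]=[1_A,\tau_A]$ forces $C_3\cong\Omega$, whence $(c_2,c_3)\simeq c_2$, $(c_1,c_3)\simeq c_1$, and $[g_1,g_2]=[c_2,c_1]=[f_2,f_1]$.

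The one genuinely substantial point, and the sole place I expect to need the pushout of Definition~\ref{h}, is that $\oplus$ is well defined: a given pair $[f_1,f_2],[g_1,g_2]$ may admit many witnessing ternary decompositions, and I must see that $[(c_1,c_2),c_3]$ is independent of the choice. The first entries are harmless, since every witness has $c_1\simeq f_1$ and $c_2\simeq g_1$; the content is that the shared factor $c_3$ is pinned down. Here the pushout applied to $(c_1,c_2,c_3)$ says precisely that $C_3$, with its two projections, is the pushout of the span $C_1\times C_3\xleftarrow{(c_1,c_3)}A\xrightarrow{(c_2,c_3)}C_2\times C_3$; since $(c_1,c_3)\simeq g_2$ and $(c_2,c_3)\simeq f_2$, this span, and hence its pushout, is determined by the complements $f_2,g_2$ of the two summands alone. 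Thus $c_3$ is recovered up to $\simeq$ as the composite of $f_2$ with the coprojection into the pushout of $f_2$ and $g_2$, so it agrees for any two witnesses, and with it the sum. This is the crux of the proof and the reason strong honesty, rather than mere products, is required; it is also what lets the associativity argument quote a well-defined value rather than a witness-dependent one.

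Finally I would treat axiom~(2), that $[f_1,f_2]\oplus[f_1,f_2]$ being defined forces $[f_1,f_2]=0$, using epicness of projections in place of the pushout. A witness $[c_1,c_2,c_3]$ gives $c_1\simeq c_2$, say $uc_1=c_2$ for an isomorphism $u$; composing with the projection $(c_1,c_2):A\to C_1\times C_2$, which is epic, yields $u\pi_1=\pi_2$ on $C_1\times C_2$, so that any morphism into $C_1\times C_2$ is determined by its first component and $\pi_1$ is an isomorphism. This forces $C_1$, equivalently $C_2$, to be subterminal, and then terminal, so that $f_1\simeq\tau_A$ and $[f_1,f_2]=[\tau_A,1_A]=0$. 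Assembling the pieces shows $(\mathcal{D}(A),\oplus,0,1)$ is an \ts{oa}, with the modification over \cite{Harding5} localized entirely in the well-definedness step, where the ternary pushout does the work formerly carried by disjointness.
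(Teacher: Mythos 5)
Most of your outline is sound: the hand-built witnesses for the unit, complements, commutativity and associativity are fine (and, as you say, need only products), and your observation that strong honesty pins down the shared factor $c_3$ as the pushout of the span determined by $f_2$ and $g_2$ is a correct way to get well-definedness of $\oplus$. The genuine gap is in your treatment of axiom (2), which is exactly the step on which the paper concentrates its entire written proof (everything else it delegates to \cite{Harding5}). Your argument correctly derives, from epicness of the projection $(c_1,c_2)$, that $u\pi_1=\pi_2$ and hence (by Yoneda) that $\pi_1\colon C_1\times C_2\to C_1$ is an isomorphism, and from this that $C_1$ and $C_2$ are subterminal. But the phrase ``subterminal, and then terminal'' is asserted with no argument, and no argument from your stated hypotheses can exist: finite products plus epic projections do not imply that a subterminal factor is terminal. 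Concretely, take a meet-semilattice with top viewed as a category (products are meets, every morphism in a poset is epic, every object is subterminal). In the three-element chain $0<a<\top$, the ternary product diagram $(a,a,0)$ of the object $0$ witnesses $[a,0]\oplus[a,0]$ being defined, yet $[a,0]\neq[\top,0]=0$; so axiom (2) genuinely fails in a category satisfying all the hypotheses your axiom (2) argument actually uses. (The chain is of course not strongly honest --- which is precisely the point: your proof of axiom (2) never invokes the pushout.)

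The missing implication is exactly the paper's Claim~\ref{w}: if $(p,q)$ is a product diagram and $q$ is an isomorphism, then $P\simeq\Omega$; you would apply it to the product diagram $(\pi_2,\pi_1)$ of $C_1\times C_2$, whose projection $\pi_1$ you have shown to be an isomorphism, to conclude $C_2\simeq\Omega$ and hence $C_1\simeq\Omega$. The paper proves that claim by exhibiting two pushouts of the same span $((p,\tau_A),(1_A,\tau_A))$ --- one with vertex $P$ by general reasoning, one with vertex $\Omega$ by applying strong honesty to the ternary product diagram $(p,1_A,\tau_A)$ --- and comparing them; its Claim~\ref{x} then handles the diagram $(p,p,q)$, whose first half (that one projection of $P\times Q$ is an isomorphism) you recover by a different, epicness-based route. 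So your localization of where the pushout is needed is backwards in one respect: strong honesty is not only a replacement for disjointness in the well-definedness step, it is indispensable for axiom (2), and the paper's proof of the theorem consists essentially of Claims~\ref{w} and~\ref{x}. Your proof becomes correct if the leap from subterminal to terminal is replaced by an invocation (and proof) of Claim~\ref{w}.
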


\begin{proof}
The proof is nearly identical to that in \cite{Harding5} for honest categories. In \cite{Harding5}, we build an \ts{oa} from disjoint binary decompositions, and have a property that applies to disjoint ternary decompositions. In strongly honest categories, all steps of the proof, save one, carry through if we simply ignore considerations of disjointness. 

The exception is in proving that if $[f_1,f_2]\oplus[f_1,f_2]$ is defined, then $[f_1,f_2]$ is the zero of $\mathcal{D}(A)$, $[\tau_A,1_A]$. In \cite{Harding5} this was established using the disjointness of $[f_1,f_2]$. However, this is true of any binary decomposition in a strongly honest category. The proof of this given below uses two claims where $p:A\to P$ and $q:A\to Q$. 

\begin{claim}
If $(p,q)$ is a product diagram and $q$ is an isomorphism, then $P\simeq\Omega$. 
\label{w}
\end{claim}

\begin{proof}[Proof of Claim: ]
Note that $(p,1_A ,1_P ,p)$ is a pushout. Then as $\pi_1 :P\times\Omega\longrightarrow P$ and $\pi_1 :A\times\Omega\longrightarrow A$ are isomorphisms with $\pi_1 \circ (p,\tau_A ) = p$ and $\pi_1 \circ (1_A,\tau_A) = 1_A$, the diagram $((p,\tau_A) ,(1_A,\tau_A) ,\pi_1 ,p\circ\pi_1 )$ is a pushout (below left). If $q$ is an isomorphism, and $(p,q)$ is a product diagram, then $(p,1_A ,\tau_A )$ is also a product diagram. Strong honesty gives $((p,\tau_A),(1_A,\tau_A) ,\pi_2 ,\pi_2 )$ is a pushout (below right). Therefore $P\cong\Omega$.

\vspace{1ex}
\begin{center}
\pushout{A}{P\times\Omega}{A\times\Omega}{P}{(p,\tau_A)}{(1_A,\tau_A)}{\pi_1}{p\circ\pi_1}{6} 
\ \hspace{1in} \ 
\pushout{A}{P\times\Omega}{A\times\Omega}{\Omega}{(p,\tau_A)}{(1_A,\tau_A)}{\pi_2}{\pi_2}{7} 
\end{center}
\vspace{-5ex}
\end{proof}

\begin{claim}
If $(p,p,q)$ is a product diagram, then $P\simeq\Omega$ and $q$ is an isomorphism.
\label{x}
\end{claim}
\vspace{-2ex}

\begin{proof}[Proof of Claim: ]
Strong honesty gives $((p,q),(p,q),\pi_2,\pi_2)$ is a pushout (below left). As $(p,p,q)$ is a product diagram and projections are epic, it follows that $(p,q)$ is epic, hence $((p,q),(p,q),1_{P\times Q},1_{P\times Q})$ is a pushout (below right). Therefore there is an isomorphism $k:P\times Q\to Q$ with $k\circ 1_{P\times Q}=\pi_2$. So $\pi_2:P\times Q\to Q$ is an isomorphism. Claim~\ref{w} then gives $P\simeq\Omega$. Thus $(\tau_A,\tau_A,q)$ is a product diagram, but so also is $(\tau_A,\tau_A,1_Q)$. From general properties of products, it follows that $q\simeq 1_Q$, hence $q$ is an isomorphism. 

\vspace{1ex}
\begin{center}
\pushout{A}{P\times Q}{P\times Q}{Q}{(p, q)}{(p, q)}{\pi_2}{\pi_2}{8} 
\ \hspace{1in} \ 
\pushout{A}{P\times Q}{P\times Q}{P\times Q}{(p, q)}{(p, q)}{1_{P\times Q}}{1_{P\times Q}}{9} 
\end{center}

\vspace{-8ex}
\end{proof}

If $[f_1,f_2]\oplus [f_1,f_2]$ is defined there is a ternary decomposition $[c_1,c_2,c_3]$ of $A$ with $[c_1,(c_2,c_3)] = [f_1,f_2]$ and $[c_2,(c_1,c_3)]=[f_1,f_2]$. It follows that $(f_1,f_1,c_3)$ is a ternary decomposition of $A$. Claim~\ref{x} gives $F_1\simeq\Omega$, hence $f_1\simeq\tau_A$, and $c_3$ is an isomorphism, hence $c_3\simeq 1_A$. Thus $[f_1,f_2]=[\tau_A,1_A]$. 
\end{proof}

To conclude this section we note there are obviously many relationships between these constructions. For instance, the category of non-empty sets is strongly honest, and the construction of an orthostructure from a set $X$ given by Theorem~\ref{set} agrees with that given by Theorem~\ref{honest}. Similar comments hold for the many ways to create an orthostructure from a vector space. However, there is so far no unifying setting that includes all the results described in the above theorems, and their extensions discussed in the remarks. 

\section{The main result in the lattice and ring setting}

Here we prove our main result in the setting of bounded modular lattices and rings, and indicate extensions to other settings. The result in the setting of strongly honest categories is given in the following section. 

\begin{thm}
Let $L$ be a bounded modular lattice and $(a,b)$ be a complementary pair in $L$. Then the interval $(a,b)\!\downarrow$ of the \ts{omp} $L^{(2)}$ is isomorphic to the \ts{omp} $a\!\downarrow^{(2)}$. 
\label{latticeproof}
\end{thm}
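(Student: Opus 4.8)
The plan is to exhibit an explicit bijection between the interval $(a,b)\!\downarrow$ of $L^{(2)}$ and the \ts{omp} $a\!\downarrow^{(2)}$ built from the interval sublattice $a\!\downarrow = [0,a]$, and then check it preserves order and orthocomplementation. First I would unwind the definitions. An element of $(a,b)\!\downarrow$ is a complementary pair $(x_1,x_2)$ of $L$ satisfying $(x_1,x_2)\leq (a,b)$, which by the definition of $\leq$ in Theorem~\ref{lattice} means $x_1\leq a$ and $b\leq x_2$. An element of $a\!\downarrow^{(2)}$ is a complementary pair $(u_1,u_2)$ of the bounded lattice $[0,a]$, i.e. $u_1,u_2\in[0,a]$ with $u_1\wedge u_2 = 0$ and $u_1\vee u_2 = a$. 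The natural candidate map sends $(x_1,x_2)\mapsto (x_1, x_2\wedge a)$, with proposed inverse $(u_1,u_2)\mapsto (u_1, u_2\vee b)$.

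The core of the argument is to verify that these maps are well defined and mutually inverse, and here modularity does the real work. Given $(x_1,x_2)$ in the interval, I must show $(x_1,x_2\wedge a)$ is a complementary pair in $[0,a]$: that $x_1\vee(x_2\wedge a)=a$ and $x_1\wedge(x_2\wedge a)=0$. The meet is immediate since $x_1\wedge x_2=0$. For the join, since $x_1\leq a$, modularity gives $x_1\vee(x_2\wedge a)=(x_1\vee x_2)\wedge a = 1\wedge a = a$, using $x_1\vee x_2 = 1$. Conversely, given a complementary pair $(u_1,u_2)$ in $[0,a]$, I must check $(u_1,u_2\vee b)$ is a complementary pair in $L$ lying below $(a,b)$. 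The condition $u_1\leq a$ and $b\leq u_2\vee b$ places it in the interval; for complementarity, $u_1\vee(u_2\vee b)=(u_1\vee u_2)\vee b = a\vee b = 1$, and the meet $u_1\wedge(u_2\vee b)=0$ again follows from modularity, since $b\leq u_2\vee b$ and $u_1\leq a$ together with $a\wedge b=0$ let me compute $u_1\wedge(u_2\vee b) = u_1\wedge a\wedge(u_2\vee b)$ and apply the modular law to collapse the $b$ term. I would then confirm that $x_2\mapsto x_2\wedge a$ and $u_2\mapsto u_2\vee b$ are inverse to each other on the relevant elements, which is another modular-law computation using $b\leq x_2$ and $x_2\vee a = 1$.

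Finally I would record that the bijection respects the order and the orthocomplementations. Order preservation in both directions is straightforward from the componentwise definition of $\leq$ and the monotonicity of $\wedge a$ and $\vee b$. For the orthocomplementation, recall that in $L^{(2)}$ we have $(x_1,x_2)'=(x_2,x_1)$, so the induced orthocomplement on the interval $(a,b)\!\downarrow$ is $(x_1,x_2)^{\#}=(a,b)\wedge(x_2,x_1)$ as in the proposition on \ts{omp} intervals; in $a\!\downarrow^{(2)}$ the orthocomplement of $(u_1,u_2)$ is $(u_2,u_1)$. I would check these match under the bijection by a direct computation of the interval meet in $L^{(2)}$.

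The main obstacle I expect is bookkeeping rather than conceptual: making sure every complementarity and order condition is verified in the correct lattice (the ambient $L$ versus the interval $[0,a]$), and ensuring that each invocation of the modular law has its hypothesis (one element below another) genuinely satisfied. The identity $(x_2\wedge a)\vee b = x_2$, needed to show the two maps invert each other, is the one place where I would be most careful, since it requires both $b\leq x_2$ and a modular-law manipulation rather than following from a single inequality.
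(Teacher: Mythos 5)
Your proposal is correct and follows essentially the same route as the paper's proof: the same maps $(x_1,x_2)\mapsto(x_1,x_2\wedge a)$ and $(u_1,u_2)\mapsto(u_1,u_2\vee b)$, the same modular-law computations for well-definedness and for the key identities $(x_2\wedge a)\vee b=x_2$ and $(u_2\vee b)\wedge a=u_2$, and the same check that the interval orthocomplement $(x_1,x_2)^{\#}=(a,b)\wedge(x_2,x_1)$ corresponds to $(u_1,u_2)\mapsto(u_2,u_1)$ under the bijection. The only difference is that the paper writes out the orthocomplementation computation explicitly, whereas you defer it as routine; carrying it out as you describe completes the argument.
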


\begin{proof}
Define maps $\Gamma:(a,b)\!\downarrow\,\,\to a\!\downarrow^{(2)}$ and $\Phi:a\!\downarrow^{(2)}\,\,\to (a,b)\!\downarrow$ as follows. 

\begin{eqnarray*}
\Gamma(x,y) &=& (x,y\wedge a)\\
\Phi(u,v)&=&(u,v\vee b)
\end{eqnarray*}
\vspace{0ex}

To see these are well defined, we must show they result in complementary pairs in the appropriate lattice. Suppose $(x,y)\in(a,b)\!\downarrow$. Then $x,y$ are complementary in $L$, $x\leq a$ and $b\leq y$. Then $x\wedge(y\wedge b)=0$, and as $x\leq a$ modularity gives $x\vee (y\wedge a) = (x\vee y)\wedge a=a$, so $x,y\wedge a$ are complements in $a\!\downarrow$. Conversely, if $u,v$ are complements in $a\!\downarrow$, then $u\vee(v\vee b)=a\vee b = 1$, and as $v\leq a$ modularity gives $u\wedge (v\vee b) \leq a\wedge (v\vee b) = v\vee (a\wedge b) = v$, so $u\wedge(v\vee b)\leq u\wedge v = 0$. 

So $\Gamma$ and $\Phi$ are well defined. It is obvious they preserve order. To see they are inverses of one another, note $\Phi\Gamma(x,y) = (x,(y\wedge a)\vee b)$ and $\Gamma\Phi(u,v) = (u,(v\vee b)\wedge a)$. As $b\leq y$ modularity gives $(y\wedge a)\vee b = y\wedge (a\vee b) = y$, and as $v\leq a$ modularity gives $(v\vee b)\wedge a = v\vee(a\wedge b) = v$. So $\Phi\Gamma$ and $\Gamma\Phi$ are identity maps. It remains only to show $\Gamma$ and $\Phi$ are compatible with orthocomplementations. We use $(x,y)^{\#}$ to denote orthocomplement in $(a,b)\!\downarrow$ and $'$ for orthocomplementation in both $L^{(2)}$ and $a\!\downarrow^{(2)}$. 

\begin{eqnarray*}
\Gamma((x,y)^{\#})&=&\Gamma((x,y)'\wedge(a,b))\\
&=&\Gamma(y\wedge a,x\vee b)\\
&=&(y\wedge a,(x\vee b)\wedge a)\\
&=&(y\wedge a,x)\\
&=&(\Phi(x,y))' \\
\end{eqnarray*}

\begin{eqnarray*}
\Phi((u,v)')&=&(v,u\vee b)\\
&=&((v\vee b)\wedge a,u\vee b)\\
&=&(u,v\vee b)'\wedge(a,b)\\
&=&(\Phi(u,v))^{\#}
\end{eqnarray*}
\end{proof} 

\begin{thm} 
For $e$ an idempotent of a ring $R$, let $R_e=\{x:ex=x=xe\}$. Then $R_e$ is a ring with unit $e$ under the multiplication and addition of $R$, and the interval $e\!\downarrow$ of the \ts{omp} $E(R)$ is equal to $E(R_e)$. 
\label{ringproof}
\end{thm}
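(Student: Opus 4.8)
The plan is to treat the two assertions of the theorem separately: first that $R_e$ is a ring with unit $e$, and then that the interval $e\!\downarrow$ of $E(R)$ coincides with $E(R_e)$ as an \ts{omp}.

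For the first assertion, I would note that $R_e$ is exactly the corner $eRe$, since $x = ex = xe$ forces $x = exe$, while $e(eye)e = eye$ shows every element of the form $eye$ satisfies the defining relations. Closure under the ring operations is then a routine check directly from the definition: if $ex = x = xe$ and $ey = y = ye$, then $e(x+y) = x+y = (x+y)e$, $e(xy) = (ex)y = xy = x(ye) = (xy)e$, and $e(-x) = -x = (-x)e$, while $e\cdot 0 = 0 = 0\cdot e$. Hence $R_e$ is closed under addition, negation, and multiplication. Finally $e \in R_e$ because $e^2 = e$, and the defining relations $ex = x = xe$ say precisely that $e$ acts as a two-sided identity on $R_e$, so $e$ is its unit.

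For the second assertion I would first verify that the underlying sets agree. An element $f$ lies in the interval $e\!\downarrow$ of $E(R)$ iff $f$ is idempotent and $f \le e$, that is, $f^2 = f$ and $ef = f = fe$; an element $f$ lies in $E(R_e)$ iff $f \in R_e$ and $f^2 = f$, that is, $ef = f = fe$ and $f^2 = f$. These are the same conditions, so the two sets coincide. The partial orders also agree: by Theorem~\ref{ring} applied to the ring $R_e$, the order on $E(R_e)$ is $f \le g$ iff $fg = f = gf$, which is exactly the order inherited from $E(R)$.

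The one genuinely substantive point, and the step I expect to be the main obstacle, is matching the two orthocomplementations, since an \ts{omp} carries its orthocomplementation as extra data not determined by the order alone. By Theorem~\ref{ring} applied to $R_e$ (whose unit is $e$), the orthocomplement of $f$ in $E(R_e)$ is $e - f$, whereas by the proposition that an interval $a\!\downarrow$ of an \ts{omp} is an \ts{omp} with orthocomplement $b^{\#} = a \wedge b'$, the orthocomplement in $e\!\downarrow$ is $f^{\#} = e \wedge (1-f)$, a meet computed in $E(R)$. I would therefore show $e \wedge (1-f) = e - f$. A short computation confirms that $e-f$ is idempotent and that $(e-f)e = e-f = e(e-f)$ and $(e-f)(1-f) = e-f = (1-f)(e-f)$, so $e-f$ is a common lower bound of $e$ and $1-f$. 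For maximality, I would take any idempotent $g$ with $g \le e$ and $g \le 1-f$; the relation $g \le 1-f$ gives $gf = 0 = fg$, whence $g(e-f) = g = (e-f)g$, that is, $g \le e-f$. Thus $e-f$ is the greatest lower bound of $e$ and $1-f$, the two orthocomplementations coincide, and $e\!\downarrow = E(R_e)$ as \ts{omp}s.
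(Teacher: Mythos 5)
Your proposal is correct and follows essentially the same route as the paper: the ring verification is routine, the underlying sets and orders of $e\!\downarrow$ and $E(R_e)$ are seen to coincide by unwinding definitions, and the orthocomplementations are matched by showing $f^{\#}=e-f$. The only difference is one of detail: you establish $e\wedge(1-f)=e-f$ directly from the definition of greatest lower bound (checking $e-f$ is an idempotent common lower bound and dominates any common lower bound), whereas the paper compresses this to the computation $f^{\#}=f'\wedge e=(1-f)e=e-f$, implicitly using that commuting idempotents meet at their product; your version simply makes that implicit step explicit.
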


\begin{proof} 
It is trivial that $R_e$ is a ring with unit $e$. Suppose $f$ belongs to the interval $e\!\downarrow$ of $E(R)$. Then $ef=f=fe$, so $f$ belongs to $R_e$ and is idempotent in $R_e$, so $f$ belongs to $E(R_e)$. Conversely, suppose $g$ belongs to $E(R_e)$. Then $g$ is idempotent in $R_e$, hence also in $R$, and $eg=g=ge$. So $g$ belongs to the interval $e\!\downarrow$ of $E(R)$. Thus as sets $e\!\downarrow$ is equal to $E(R_e)$. The definition of $\leq$ in both structures is $gh=g=hg$, so they coincide. It remains only to show their orthocomplementations agree. The orthocomplementation $\#$ in the interval $e\!\downarrow$ is given by $f^{\#}=f'\wedge e = (1-f)e$. Then as $fe=f$ this evaluates to $e-f$, which is the orthocomplementation in $E(R_e)$. 
\end{proof}

\begin{rmk}{\em 
Theorem~\ref{latticeproof} has several generalizations. First, one notices it applies to the case of symmetric lattices as each step only involves basic properties of $M$ and $M^*$-symmetry found in \cite{Maeda}. The main part is in showing the images of the isomorphisms are modular and dual modular pairs. Next, and perhaps somewhat surprising, one notices it applies to the relation algebra setting with the interval $(a,b)\!\!\downarrow$ in $R^{(2)}$ being isomorphic to $a\!\!\downarrow^{(2)}$ where $a\!\!\downarrow$ is naturally considered as a relation algebra. Here the key point is that small fragments of modularity hold in any relation algebra. These were discovered by Chin and Tarski, and their role in the current context is described in detail in \cite{Harding1}. Theorem~\ref{ringproof} also has generalizations to situations described in Remark~\ref{a}. 
}
\end{rmk}

\section{The main result in the categorical setting}

\begin{thm}
Suppose $A$ is an object in a strongly honest category $\mathcal{C}$ and $[h_1,h_2]$ is a binary decomposition of $A$ where $h_i:A\to H_i$. Then the interval $[h_1,h_2]\!\downarrow$ of the \ts{omp} $\mathcal{D}(A)$ is isomorphic to the \ts{omp} $\mathcal{D}(H_1)$. 
\end{thm}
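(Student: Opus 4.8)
The plan is to mimic the structure of the proof of Theorem~\ref{latticeproof}: construct a forward map $\Gamma$ from the interval $[h_1,h_2]\!\downarrow$ into $\mathcal{D}(H_1)$ and a backward map $\Phi$ in the other direction, show both are well defined, check they are mutually inverse, and finally verify that they preserve the order and the orthocomplementation. Since $\mathcal{D}(A)$ is built as an \ts{oa} and its order is derived from $\oplus$, it will be cleanest to work at the level of the \ts{oa} structure and then invoke the equivalence between \ts{oa}s and \ts{omp}s.

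The central idea for $\Gamma$ is this. A decomposition $[f_1,f_2]$ lying below $[h_1,h_2]$ means, by the definition of $\leq$ in $\mathcal{D}(A)$, that there is a ternary decomposition $[c_1,c_2,c_3]$ of $A$ with $[f_1,f_2]=[c_1,(c_2,c_3)]$ and $[h_1,h_2]=[(c_1,c_2),c_3]$; equivalently $A\simeq C_1\times C_2\times C_3$ with $H_1\simeq C_1\times C_2$ and $f_1\simeq c_1$. The point is that $c_1,c_2$ together give a binary decomposition of the factor $H_1$. So I would set $\Gamma([f_1,f_2])$ to be the binary decomposition $[c_1',c_2']$ of $H_1$ obtained by refining the identity decomposition of $H_1\simeq C_1\times C_2$, i.e. the decomposition of $H_1$ whose first component is the projection $H_1\to C_1$. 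For $\Phi$, a binary decomposition $[g_1,g_2]$ of $H_1$ with $H_1\simeq G_1\times G_2$ composes with $h_1:A\to H_1$ to give the ternary decomposition $A\simeq G_1\times G_2\times H_2$, and $\Phi([g_1,g_2])$ is the binary decomposition $[\,g_1\circ h_1,(g_2\circ h_1,h_2)\,]$ of $A$, which visibly lies below $[h_1,h_2]$.

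The verifications that $\Gamma,\Phi$ are well defined and inverse to one another are largely bookkeeping with the equivalences $\simeq,\approx$ and the universal property of products, together with the fact that refining one factor of a product decomposition yields a finer product decomposition of the whole. That $\Gamma$ and $\Phi$ preserve order should follow once the \ts{oa} operation $\oplus$ is matched up: orthogonality of two decompositions below $[h_1,h_2]$ corresponds, after passing through $\Gamma$, to orthogonality of the induced decompositions of $H_1$, because a quaternary decomposition of $A$ refining $[h_1,h_2]$ restricts to a ternary decomposition of $H_1$. I expect the compatibility with orthocomplementation to be essentially automatic once $\Gamma$ is seen to respect $\oplus$, since in the interval the orthocomplement of $[f_1,f_2]$ relative to $[h_1,h_2]$ is characterized by the orthogonal-join relation, which $\Gamma$ transports.

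The main obstacle will be establishing, entirely within the categorical language and using only strong honesty, that refinement and restriction of product decompositions behave as the argument requires—specifically that $\Gamma$ is well defined independent of the choice of ternary witness $[c_1,c_2,c_3]$, and that it carries $\oplus$ in $[h_1,h_2]\!\downarrow$ to $\oplus$ in $\mathcal{D}(H_1)$. This is precisely the place where disjointness was needed in the original honest-category construction and why the theorem is stated for strongly honest categories: working with all binary decompositions rather than only disjoint ones is what lets the induced decomposition of the factor be formed without a separate disjointness check. I would therefore isolate a lemma, proved via the pushout of Definition~\ref{h} applied to the relevant ternary and quaternary diagrams, asserting that a product decomposition of $A$ refining $[h_1,h_2]$ corresponds bijectively and $\oplus$-compatibly to a product decomposition of $H_1$, and then read off the theorem from it.
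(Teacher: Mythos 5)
Your proposal takes essentially the same route as the paper: your maps are exactly the paper's ($\Phi([m_1,m_2])=[m_1h_1,(m_2h_1,h_2)]$, and $\Gamma$ given by the isomorphism $\gamma=(\gamma_1,\gamma_2):H_1\to F_1\times G_1$ with $\gamma\circ h_1=(f_1,g_1)$), your verification plan (well-definedness, mutual inverses, $\oplus$-preservation, orthostructure then automatic) is the paper's plan, and your diagnosis that using \emph{all} binary decompositions in strongly honest categories is what circumvents the disjointness obstruction is precisely the paper's point. The only minor divergence is that the paper settles the deferred verifications not by fresh pushout arguments but by \ts{oa}-level reasoning—cancellativity of $\oplus$ makes the complement $[g_1,g_2]$ unique, epicness of the projection $h_1$ makes $\gamma$ unique, and associativity/commutativity of $\oplus$ together with basic properties of products give $\oplus$-preservation—with strong honesty entering only through Theorem~\ref{honest}, i.e., to ensure $\mathcal{D}(A)$ and $\mathcal{D}(H_1)$ are orthoalgebras at all.
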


\begin{proof}
We first define a map $\Gamma:[h_1,h_2]\!\downarrow\,\,\to \mathcal{D}(H_1)$. Suppose $[f_1,f_2]\leq[h_1,h_2]$. By definition of $\leq$ there is $[g_1,g_2]$ with $[f_1,f_2]\oplus[g_1,g_2]$ defined and equal to $[h_1,h_2]$, and as every \ts{oa} is cancellative this $[g_1,g_2]$ is unique. By definition of $\oplus$ there is a ternary decomposition $[c_1,c_2,c_3]$ of $A$ with $[f_1,f_2]=[c_1,(c_2,c_3)]$, $[g_1,g_2]=[c_2,(c_1,c_3)]$ and $[h_1,h_2]=[(c_1,c_2),c_3]$. So this ternary decomposition is equal to $[f_1,g_1,h_2]$. As $h_1\simeq (f_1,g_1)$ there is an isomorphism $\gamma:H_1\to F_1\times G_1$ with $\gamma\circ h_1 = (f_1,g_1)$, and this $\gamma$ is unique since $h_1$ is a projection and projections are epic. Basic properties of products show this $\gamma$ can be written $(\gamma_1,\gamma_2)$ where $[\gamma_1,\gamma_2]$ is a decomposition of $H_1$. We define $\Gamma([f_1,f_2])=[\gamma_1,\gamma_2]$. 

\setlength{\unitlength}{.05in}
\begin{center}
\begin{picture}(30,20)(0,0)
\put(0,7.5){\makebox(0,0)[r]{$A$}}
\put(25,15){\makebox(0,0)[c]{$H_1$}}
\put(25,0){\makebox(0,0)[c]{$F_1\times G_1$}}
\put(2,8){\vector(8,3){18}}
\put(2,6){\vector(8,-3){16}}
\put(25,12){\vector(0,-1){9}}
\put(12,13.5){\makebox(0,0)[br]{$h_1$}}
\put(10.5,1.5){\makebox(0,0)[tr]{$\small{(f_1,g_1)}$}}
\put(30,7.5){\makebox(0,0)[l]{$\gamma=(\gamma_1,\gamma_2)$}}
\end{picture}
\end{center}
\vspace{5ex}

The above discussion established the following.

\begin{claim}
If $[f_1,f_2] \oplus [g_1,g_2] = [h_1,h_2]$, then $\Gamma([f_1,f_2]) = [\gamma_1,\gamma_2]$ iff $\gamma_1\circ h_1 = f_1$ and $\gamma_2\circ h_1 = g_1$. 
\label{z}
\end{claim}

We next define $\Phi:\mathcal{D}(H_1)\to[h_1,h_2]\!\downarrow$. Let $[m_1,m_2]$ be a decomposition of $H_1$. Then $(m_1h_1,m_2h_1,h_2)$ is a ternary decomposition of $A$. Thus, by the definition of $\oplus$ we have $[m_1h_1,(m_2h_1,h_2)]\oplus [m_2h_1,(m_1h_1,h_2)] = [(m_1h_1,m_2h_1),h_2]$. Basic properties of products give $(m_1h_1,m_2h_1)\simeq h_1$, so this latter term is $[h_1,h_2]$. This shows $[m_1h_1,(m_2h_1,h_2)]\leq [h_1,h_2]$. We define $\Phi([m_1,m_2]) = [m_1h_1,(m_2h_1,h_2)]$. 

\begin{claim}
$\Gamma$ and $\Phi$ are mutually inverse bijections. 
\end{claim}

\begin{proof}[Proof of Claim: ] 
Suppose $[f_1,f_2]\in[h_1,h_2]\!\downarrow$ and $[g_1,g_2]$ is the decomposition with $[f_1,f_2]\oplus[g_1,g_2]=[h_1,h_2]$. Then $\Gamma([f_1,f_2]) = [\gamma_1,\gamma_2]$ where $(\gamma_1,\gamma_2)\circ h_1 = (f_1,g_1)$, hence $\gamma_1\circ h_1 = f_1$ and $\gamma_2\circ h_2=g_1$. Then $\Phi\Gamma([f_1,f_2]) = [\gamma_1h_1,(\gamma_2h_1,h_2)] = [f_1,(g_1,h_2)]$. In the discussion of the definition of $\Gamma$, we saw that $(g_1,h_2)=f_2$. It follows that $\Phi\circ\Gamma$ is the identity. For the other composite, suppose $[m_1,m_2]\in\mathcal{D}(H_1)$. Then $\Phi([m_1,m_2]) = [m_1h_1,(m_2h_1,h_2)]$ and in the discussion of the definition of $\Phi$ we saw that $[m_1h_1,(m_2h_1,h_2)]\oplus [m_2h_1,(m_1h_1,h_2)] = [h_1,h_2]$. So $\Gamma\Phi([m_1,m_2])$ is the unique isomorphism $(\gamma_1,\gamma_2)$ with $(\gamma_1,\gamma_2)\circ h_1=(m_1h_1,m_2h_1)$, which is $(m_1,m_2)$. So $\Gamma\circ\Phi$ is also the identity. 
\end{proof}

We say that a map $\Pi$ between \ts{oa}s preserves $\oplus$ if $x\oplus y$ being defined implies $\Pi(x)\oplus\Pi(y)$ is defined and $\Pi(x\oplus y)=\Pi(x)\oplus\Pi(y)$. 

\begin{claim}
$\Gamma$ preserves $\oplus$. 
\end{claim}

\begin{proof}
Note that the operation $\oplus$ in the \ts{oa} $[h_1,h_2]\!\!\downarrow$ is the restriction of the operation $\oplus$ of $\mathcal{D}(A)$.
Suppose $[e_1,e_2]$ and $[f_1,f_2]$ belong to $[h_1,h_2]\!\downarrow$ and $[e_1,e_2]\oplus [f_1,f_2]$ is defined. Then there is $[g_1,g_2]$ with $([e_1,e_2]\oplus[f_1,f_2])\oplus[g_1,g_2] = [h_1,h_2]$. Let $(c_1,c_2,c_3)$ and $(d_1,d_2,d_3)$ be the ternary decompositions of $A$ realizing $[e_1,e_2]\oplus [f_1,f_2]$ and $([e_1,e_2]\oplus[f_1,f_2])\oplus[g_1,g_2]$ respectively. Then 

\begin{eqnarray*}
\mbox{$[c_1,(c_2,c_3)]$}&=&\mbox{$[e_1,e_2]$}\\
\mbox{$[c_2,(c_1,c_3)]$}&=&\mbox{$[f_1,f_2]$}\\
\mbox{$[(c_1,c_2),c_3]$}&=&\mbox{$[e_1,e_2]\oplus[f_1,f_2]$}\\
&&\\
\mbox{$[d_1,(d_2,d_3)]$}&=&\mbox{$[e_1,e_2]\oplus[f_1,f_2]$}\\
\mbox{$[d_2,(d_1,d_3)]$}&=&\mbox{$[g_1,g_2]$}\\
\mbox{$[(d_1,d_2),d_3]$}&=&\mbox{$[h_1,h_2]$}
\end{eqnarray*}
\vspace{0ex}

\noindent It follows that $c_1\simeq e_1$, $c_2\simeq f_1$, $c_3\simeq(d_2,d_3)$, $d_1\simeq(c_1,c_2)$, $d_2\simeq g_1$ and \mbox{$d_3\simeq h_2$}. As $[d_1,d_2,d_3] = [(e_1,f_1),g_1,h_2]$ it follows from basic properties of products that $[e_1,f_1,g_1,h_2]$ is a decomposition of $A$ and $[(e_1,f_1,g_1),h_2] = [(d_1,d_2),d_3] = [h_1,h_2]$. Thus $(e_1,f_1,g_1)\simeq h_1$, so there is an isomorphism $\gamma:H_1\to E_1\times F_1\times G_1$ with $\gamma\circ h_1 = (e_1,f_1,g_1)$. Basic properties of products show $\gamma=(\gamma_1,\gamma_2,\gamma_3)$ where $[\gamma_1,\gamma_2,\gamma_3]$ is a ternary decomposition of $H_1$ and $\gamma_1h_1=e_1$, $\gamma_2h_1=f_1$ and $\gamma_3h_1=g_1$. 

As $([e_1,e_2]\oplus[f_1,f_2])\oplus [g_1,g_2]$ is defined and equal to $[h_1,h_2]$, the associativity condition in any \ts{oa} implies $[f_1,f_2]\oplus[g_1,g_2]$ is defined and $[e_1,e_2]\oplus([f_1,f_2]\oplus[g_1,g_2])$ is defined and is equal to $[h_1,h_2]$. As $[e_1,f_1,g_1,h_2]$ is a decomposition of $A$, so is $[f_1,g_1,(e_1,h_2)]$, and this ternary decomposition realizes $[f_1,f_2]\oplus[g_1,g_2]$ being defined, hence being equal to $[(f_1,g_1),(e_1,h_2)]$. Thus $[e_1,e_2]\oplus[(f_1,g_1),(e_1,h_2)]=[h_1,h_2]$. Since $\gamma_1h_1 = e_1$ and $(\gamma_2,\gamma_3) h_1 = (f_1,g_1)$, Claim~\ref{z} shows $\Gamma([e_1,e_2]) = (\gamma_1,(\gamma_2,\gamma_3))$.  

A similar calculation making use of the commutativity of $\oplus$ in any \ts{oa} shows that $\Gamma([f_1,f_2])=(\gamma_2,(\gamma_1,\gamma_3))$, and another shows $\Gamma([e_1,e_2]\oplus[f_1,f_2]) = [(\gamma_1,\gamma_2),\gamma_3]$. Thus $(\gamma_1,\gamma_2,\gamma_3)$ realizes $\Gamma([e_1,e_2])\oplus \Gamma([f_1,f_2])$ is defined and equal to $\Gamma([e_1,e_2]\oplus[f_1,f_2])$. This concludes the proof of the claim. 
\end{proof}

\begin{claim}
$\Phi$ preserves $\oplus$.
\end{claim}

\begin{proof}[Proof of Claim: ]
Suppose that $[m_1,m_2]$ and $[n_1,n_2]$ are decompositions of $H_1$ with $[m_1,m_2]\oplus [n_1,n_2]$ defined and $[p_1,p_2,p_3]$ is a ternary decomposition realizing this. So $[m_1,m_2]=[p_1,(p_2,p_3)]$, $[n_1,n_2]=[p_2,(p_1,p_3)]$ and $[m_1,m_2]\oplus[n_1,n_2]=[(p_1,p_2),p_3]$. 
The definition of $\Phi$ gives 

\begin{eqnarray*}
\Phi([m_1,m_2])&=&[m_1h_1,(m_2h_1,h_2)]\\
\Phi([n_1,n_2])&=&[n_1h_1,(n_2h_1,h_2)]\\
\Phi([m_1,m_2]\oplus[n_1,n_2])&=&[(m_1,n_1)h_1,(p_3h_1,h_2)]
\end{eqnarray*}
\vspace{0ex}

\noindent As $[p_1,p_2,p_3]$ is a decomposition of $H_1$ and $[h_1,h_2]$ is a decomposition of $A$, general properties of products show that $[p_1h_1,p_2h_1,p_3h_1,h_2]$ is a decomposition of $A$. 

Consider the ternary decomposition $[p_1h_1,p_2h_1,(p_3h_1,h_2)]$ of $A$. We note that $p_1h_1\simeq m_1h_1$, $(p_2h_1,(p_3h_1,h_2))\simeq ((p_2,p_3)h_1,h_2) \simeq (m_2h_1,h_2)$ and similarly that $p_2h_1\simeq n_1h_1$, and $(p_1h_1,(p_3h_1,h_2))\simeq (n_2h_1,h_2)$. This shows $\Phi([m_1,m_2])\oplus\Phi([n_1,n_2])$ is defined and equal to $[(p_1h_1,p_2h_1),(p_3h_1,h_2)]$. As $(p_1h_1,p_2h_1)\simeq (m_1,n_1)h_1$, it follows that $\Phi([m_1,m_2])\oplus\Phi([n_1,n_2]) = \Phi([m_1,m_2]\oplus[n_1,n_2])$. 
\end{proof}

To show $\Gamma$ and $\Phi$ are mutually inverse \ts{oa} isomorphisms, it remains only to show they preserve bounds. As we know they are inverses, it suffice to show one of them preserves bounds. In the \ts{oa} $[h_1,h_2]\!\downarrow$ we have $0=[\tau_A,1_A]$ and $1=[1_A,\tau_A]$. Note $[\tau_A,1_A]\oplus[h_1,h_2]=[h_1,h_2]$, and it follows from Claim~\ref{z} that $\Gamma([\tau_A,1_A]) = [\tau_{H_1},1_{H_1}]$ and $\Gamma([h_1,h_2]) = [1_{H_1},\tau_{H_1}]$. So $\Gamma$ preserves bounds. This concludes the proof of the theorem. 
\end{proof}

\begin{rmk}{\em 
We do not know if this result in the strongly honest setting extends to the honest setting. As mentioned above, the difficulty is in establishing disjointness of $\Gamma([f_1,f_2])$ etc. 
}
\end{rmk}

\newpage

\end{document}